\documentclass[a4 paper]{article}  


\usepackage{graphicx}
\usepackage{epsfig} 
\usepackage{times} 
\usepackage{amsmath}
\usepackage{amssymb}  
\usepackage{color}
\usepackage{amsfonts}
\usepackage{subfigure}
\usepackage{multirow}
\usepackage{multicol}
\usepackage{siunitx}
\usepackage{overpic}
\usepackage{rotating}
\usepackage{booktabs}
\usepackage{threeparttable}

\usepackage{epstopdf}
\epstopdfsetup{update} 

\newtheorem{proof}{Proof}
\newtheorem{lemma}{Lemma}

\newtheorem{theorem}{Theorem}

\newtheorem{corollary}{Corollary}
\usepackage{xspace}

\newcommand{\cotwo}{\mbox{CO}\ensuremath{_2}\mbox{ }}
\newcommand{\cotwon}{\mbox{CO}\ensuremath{_2}}


\begin{document}

\title{\LARGE \bf Modeling and Estimation of the Humans' Effect on the \cotwo Dynamics Inside a Conference Room}

\author{Kevin Weekly, Nikolaos Bekiaris-Liberis and Alexandre M. Bayen
\thanks{*This research is funded by the Republic of Singapore's National Research Foundation through a grant to the Berkeley Education Alliance for Research in Singapore (BEARS) for the Singapore-Berkeley Building Efficiency and Sustainability in the Tropics (SinBerBEST) Program.}%
\thanks{K. Weekly, N. Bekiaris-Liberis, and A. M. Bayen are with the Department of Electrical Engineering and Computer Sciences, University of California, Berkeley, Berkeley, CA, 94720, USA. {\tt\small kweekly@eecs.berkeley.edu,  bekiaris-liberis@berkeley.edu, and bayen@berkeley.edu}}
}


\maketitle
\thispagestyle{empty}

\begin{abstract}
\baselineskip=1.7\normalbaselineskip
We develop a data-driven, {\em Partial Differential Equation-Ordinary Differential Equation} (PDE-ODE) model that describes the response of the {\em Carbon Dioxide} (\cotwon) dynamics inside a conference room, due to the presence of humans, or of a user-controlled exogenous source of \cotwon. We conduct two controlled experiments in order to develop and tune a model whose output matches the measured output concentration of \cotwo inside the room, when known inputs are applied to the model. In the first experiment, a controlled amount of \cotwo gas is released inside the room from a regulated supply, and in the second, a known number of humans produce a certain amount of \cotwo inside the room. For the estimation of the exogenous inputs, we design an observer, based on our model, using measurements of \cotwo concentrations at two locations inside the room. Parameter identifiers are also designed, based on our model, for the online estimation of the parameters of the model. We perform several simulation studies for the illustration of our designs. 
\end{abstract}
\baselineskip=1.8\normalbaselineskip

\section{Introduction}

\subsection{Motivation}

Reducing energy demand is an important component of smart building research. Building energy use is responsible for an increasing proportion of the total energy demand. In the United States, the proportion of building electricity consumption has raised to 40\% in 2005, from 33\% in 1980~\cite{doe2008energy} and in Singapore, buildings accounted for 31\% of the total electricity consumption for the year 2007 \cite{nea}. Thus, the problem of reducing building energy demand through advanced technologies and finer-tuned services has been the focus of ongoing research.  The knowledge of occupancy levels in discrete zones within a building offers the potential of significant energy savings when coupled with zonal control of building services \cite{agarwal,chao,erickson}, which is a motivation for the work presented in the present article.

A relatively unexplored approach for estimating the number of humans occupying discrete zones of office spaces, such as, for example, a conference room within a larger office space, is to model and estimate the effect of the \cotwo that is produced from humans on the total \cotwo concentration in the specific discrete zone (i.e., the conference room). The reason is that humans are the primary producers of \cotwo inside a building \cite{sepannen} and that \cotwo sensors are widely deployed in smart buildings (since \cotwo is an important quantity to observe in order to manage occupant comfort \cite{sepannen} and since this quantity can be measured using sensors which are cheap).

Modeling \cotwo dynamics is challenging, due to the complexity of air dynamics.  Most recently, two categories of models are used: Zonal models and {\em Computational Fluid Dynamics} (CFD) models.  CFD models provide the most rich and detailed view of air motion in a space, however, they are beset by arduous work in modeling the physical space (e.g. providing locations of all walls, furniture, and occupants) and identifying all parameters that are needed for the model.  CFD models also suffer from lengthy computation times to solve the necessary PDEs at a high resolution, especially near boundaries \cite{megri}, \cite{persily}. Zonal models relate the movement of air between discrete and well-mixed spaces, such as rooms and parts of rooms.   Generally, zonal models rely on ODE mass-balance laws between these spaces, which, in comparison to CFD models, can be solved very quickly \cite{megri}.  However, this comes at the expense of not modeling the distributed nature of airborne contaminant transfer within a single space, and complex local phenomena such as jets of air coming from a vent \cite{mora}.




Yet, for designing and implementing estimation algorithms for the \cotwo concentration, one has to develop a simple, and at the same time, accurate PDE-based model that retains the distributed character of the system. Based on this model, one can then design an observer for estimating the unknown \cotwo input that is produced from humans. The observer design has to be developed using the minimum number of sensors, in order to reduce cost and increase reliability. It is also crucial to develop online identifiers for the parameters of the model, since these parameters change with time due to their dependency on time-varying quantities such as heat generation \cite{baug}. 


\subsection{Literature}





Boundary observers for some classes of PDEs are constructed in \cite{flo1}, \cite{flo2}, \cite{krstic book siam}, \cite{mir1}, \cite{adrey observer} via backstepping. In \cite{moura1}, this methodology is applied for the estimation of the state-of-charge of batteries. Observer designs for time-delay systems with unknown inputs are presented in \cite{amin1}, \cite{litrico1}, \cite{litrico2}. Swapping identifiers, originally developed for parameter estimation of ODE systems \cite{ioannou}, \cite{krstic}, are constructed for parabolic PDEs in \cite{adaptive pde}, \cite{adaptive1}, \cite{adaptive2}, \cite{adaptive3}. In \cite{moura} this class of identifiers is employed for the identification of the state-of-health of batteries. Update laws for the estimation of unknown plant parameters and delays, in adaptive control of linear and nonlinear systems with input delays, are developed in \cite{me1}, \cite{me2}, \cite{delphine1}, \cite{delphine2}, \cite{delphine3}, \cite{delphine4}.


\subsection{Results}

We model the dynamics of the \cotwo concentration in the room using a convection PDE with a source term which is the output of a first-order ODE system driven by an unknown input which models the human's emission rate of \cotwon. The source term represents the effect of the humans on the \cotwo concentration in the room. In our experiments, we observe a delay in the response of the \cotwo concentration in the room to changes in the human's input. For this reason, the source term is a filtered version of the unknown input rather than the actual input. We assume that the unmeasured input from the humans has the form of a piecewise constant signal. This formulation is based on our experimental observation that humans contribute to the rate of change of the \cotwo concentration of the room with a filtered version of step-like changes in the rate of \cotwon.

The value of the PDE at the one boundary of its spatial domain indicates the \cotwo concentration inside the room at the location of the air supply. At this location, incoming air is entering the room, and hence, one can view the \cotwo concentration of the fresh incoming air as an input to the system. The value of the PDE at the other boundary of its spatial domain indicates the \cotwo concentration at the air return of the ventilation system. The air at this point is mixed with \cotwo that convects from the air supply towards the air return, and with \cotwo that is produced from humans. We consider the \cotwo concentration at this point as the output of our system. Any value of the PDE on an interior point of its spatial domain is an indicator of the concentration of \cotwo at the ceiling in a (non-ratiometric) normalized distance along an axis from the supply to the return vent.




We design an observer for the overall PDE-ODE system using boundary measurements (at the air supply and the air return). The observer estimates the unknown input from the humans, as well as the overall PDE state of our model. Our observer design and the proof of exponential stability of the observation error is based on the observer design from \cite{bekiaris} for linear systems with distributed sensor delays. We design PDE and ODE swapping identifiers for the three constant parameters of the overall PDE-ODE system, namely the convection speed of the PDE, the coefficient that multiplies the source term which affects the PDE state on its whole spatial domain, and the time constant of the ODE. We prove that all the identifiers are stable and that the identifier of the time constant of the ODE converge to its true value when the input from the humans is not zero. For the case in which the convection speed is known, we also prove that both the identifiers for the coefficient that multiplies the source term in the PDE and the time constant of the ODE converge to their true vales when the input from the humans is not zero.



\subsection{Structure of the Article}
In Section~\ref{secmodel}, we derive a coupled PDE-ODE model for the dynamics of the \cotwo concentration in the room. In Section~\ref{secobs}, we design an observer for the estimation of the total \cotwo that is generated by humans. We design parameter identifiers in Section~\ref{sec ide}, which are used for online estimation of the values of the model's parameters. 
 
\textit{Notation}: The spatial $L_2(0,1)$ norm is denoted by $\|\cdot\|$. The temporal norms are denoted by $\mathcal{L}_{\infty}$ and $\mathcal{L}_{2}$ for $t\ge0$.

\section{Model of the \cotwo Dynamics}
\label{secmodel}

Our model consists of a PDE and an ODE part. The ODE part is given by
\begin{eqnarray}
\dot{X}(t)&=&-aX(t)+V(t)\label{syss2}\\
\dot{V}(t)&=&0\label{sys3new},
\end{eqnarray}
where, $X(t)$, in ppm, models the source term of human \cotwo production on the relative concentration (in ppm) of the room in the local vicinity of the human (the evolution of which is described later on by a PDE), and $V(t)$ is a step-valued function, in ppm $\cdot$ s$^{-1}$, representing the level of the human \cotwo production rate within the vicinity of humans. Parameter, $\frac{1}{a}$, in units of $100$s, represents a time constant specifying how fast changes in occupancy affect the \cotwo concentration in the room, in the local vicinity of the human.


The ODE is coupled with a PDE that models the \cotwo concentration in the room given by
\begin{eqnarray}
u_t(x,t)&=&bu_x(x,t)+b_XX(t)\label{sys1new}\\
u(0,t)&=&U_{\rm e}-\Delta U(t),\label{syss}
\end{eqnarray}
with $\Delta U(t)=U(t)-U_{\rm e}$, where $u(x,t)$, in ppm, is the concentration of \cotwo in the room at a time $t\geq0$ s and for $0\leq x\leq 1$, $-b>0$, in $\frac{1}{100\textrm{s}}$, represents the rate of air movement in the room, and $b_X > 0$, in $\frac{1}{10^4\textrm{s}}$, specifies the rate of diffusion of \cotwo from the local vicinity of the human to the room. The spatial variable $x$ is unitless and represents a normalized distance along a horizontal axis that connects the air supply and air return. The air supply and air return are located at $x=0$ and $x=1$ respectively. Therefore, $u(0,t)$ is the \cotwo concentration inside the room at the location of the air supply and $u(1,t)$ is the \cotwo concentration inside the room at the location of the air return. The input $U(t)$ is the measured \si{ppm} concentration of the fresh incoming air. We do not simply specify the boundary condition at $x=0$ as $u(0,t)=U(t)$. The reason is that during our experiments we observe that a sudden drop in the measured \cotwo concentration at the air supply results in an increase of the \cotwo concentration at the air return. Our explanation for this effect is that a drop in \cotwo concentration at the supply from its equilibrium value corresponds to increased airflow at the vent, i.e. more fresh air gets mixed in the local vicinity. The increased airflow has the effect of pushing pockets of \cotwo air out of the return vent. One way to capture this effect is to multiply the difference of the \cotwo concentration from its equilibrium value $\Delta U(t)=U(t)-U_{\rm e}$ with minus one, where $U_{\rm e}$, in ppm, is the steady state input \cotwo concentration at the supply ventilation.

In Fig.~\ref{figillu},
\begin{figure}
\centering
\begin{overpic}[width=0.6\linewidth,tics=5]{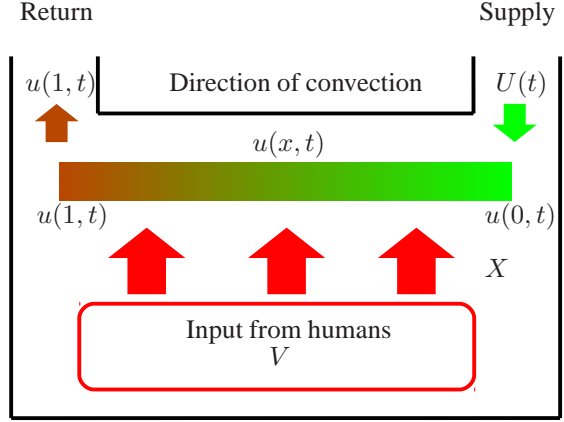}
\put(32,15){Input from humans}
\put(29,60){Direction of convection}
\put(29,57){}
\put(85,73){Supply}
\put(2,73){Return}
\put(47,10){$V$}
\put(44,49){$u(x,t)$}
\put(5,36){$u(1,t)$}
\put(86,36){$u(0,t)$}
\put(86,26){$X$}
\put(3,60){$u(1,t)$}
\put(88,60){$U(t)$}
\end{overpic}
\caption{The geometrical representation of our model. Fresh air ($U$) enters the room from the supply ventilation. Air near the ceiling ($u$) convects from the air supply to the air return vent. The humans produce \cotwo ($V$) which rises ($X$) to the ceiling.}
\label{figillu}
\end{figure}
we illustrate the geometrical representation of our model.  The PDE portion of the model, $u(x,t)$, represents convection of air from the air supply to the air return vent near the ceiling.  Note the absence of a diffusive term, which we have omitted since it plays a relatively minor role in dispersing indoor pollutants \cite{baug}.  We choose to model the \cotwo concentrations near the ceiling since this is where we see most effect from human-generated \cotwo.  This is explained by the fact that a warm breath from a human occupant acts as a ``bubble'' of gas that rises to the ceiling, since it is more buoyant than the ambient, cooler air.  Thus, the air coming from lower in the room is modeled as a source term on the PDE across its entire length. The ODE portion of the model is intended to model the fact that this bubble of air does not immediately rise to the ceiling but only gradually.




In Fig. \ref{fig1}
\begin{figure}[t]
\centering
\includegraphics[width=4in]{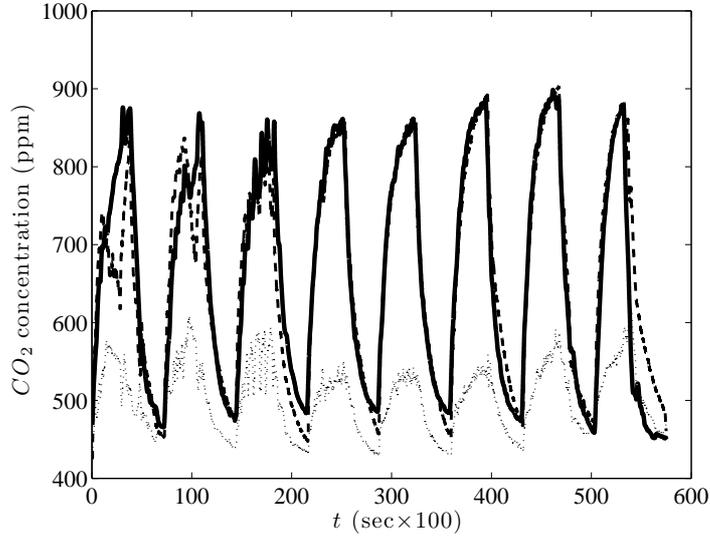}
\caption{Solid line: The simulated concentration of \cotwo at the air return $u(1,t)$ given by the model (\ref{syss2})--(\ref{syss}) for Experiment I. Dashed line: The concentration of the \cotwo at the air return measured by the \cotwo sensor. Dotted line: The concentration of \cotwo at the air supply measured by the \cotwo sensor.}
\label{fig1}
\end{figure}
 we show the concentration of \cotwo at the air return and the air supply measured by the \cotwo sensors for our first experiment in which we periodically release \cotwo every one hour. We also show the output $u(1,t)$ of our model with parameters as shown in Table \ref{table1}\footnote{In this section we manually tune the parameters of model (\ref{syss2})--(\ref{syss}) in order to match the measured \cotwo concentration at the air return with $u(1,t)$. In Section \ref{sec ide} we design identifiers for online identification of the parameters of model (\ref{syss2})--(\ref{syss}).} and initial condition $u(x,0)=400$ ppm. 
 \begin{table}[t]
\caption{Parameters of the Model (\ref{syss2})--(\ref{syss}) for Experiment I.}
\begin{center}
\resizebox{4in}{!} {
\begin{tabular}{ccc}
\hline\hline
Physical Paramater&Model parameter & Value\\
\hline
Convection coefficient $\left(\frac{1}{100\rm{s}}\right)$&$-b$&$0.8$ \\[2mm]
Source term coefficient $\left(\frac{1}{10^{4}\rm{s}}\right)$&$b_X$&$0.2$ \\[2mm]
Time constant of the human's effect $\left({100\rm{s}}\right)$& $\frac{1}{a}$&$10$\\[2mm]
Equilibrium concentration at the air return (ppm)& $U_{\rm e}$&$450$\\
\hline
\end{tabular}}
\label{table1}
\end{center}
\end{table}%
The input $V$ to our model, with which we emulate the behavior of the \cotwo that is released from the pump, is the square wave that is shown in Fig. \ref{fig2}.
 \begin{figure}[t]
\centering
\includegraphics[width=4in]{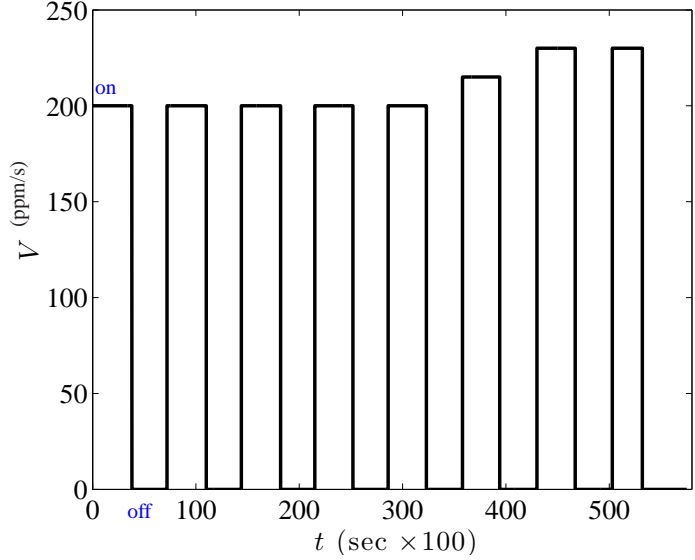}
\begin{picture}(0, 0) 
 \thicklines
\put(-285, 124){\begin{sideways}\footnotesize(ppm/s)\end{sideways}}
\put(-253, 174){\footnotesize \textcolor{blue}{on}}
\put(-241, 14){\footnotesize \textcolor{blue}{off}}
\end{picture}
\caption{The input $V$ to the model (\ref{syss2})--(\ref{syss}) from Experiment I modeling the concentration of \cotwo that is released from the pump. When $V=0$ the \cotwo pump is turned off and when $V\neq0$ the \cotwo pump is turned on.}
\label{fig2}
\end{figure}

In Fig. \ref{fig5} we show the \cotwo concentration from Experiment II measured from the \cotwo sensor and predicted from model (\ref{syss2})--(\ref{syss}) with parameters shown in Table \ref{table3}, initial condition $u(x,0)=400$ ppm, and input $V$ that is shown in Fig. \ref{fig6}, with which we emulate the behavior of the \cotwo that is produced by humans.
 \begin{table}[t]
\caption{Parameters of the Model (\ref{syss2})--(\ref{syss}) for Experiment II.}
\begin{center}
\resizebox{4in}{!} {
\begin{tabular}{ccc}
\hline\hline
Physical Paramater&Model parameter & Value\\
\hline
Convection coefficient $\left(\frac{1}{100\rm{s}}\right)$&$-b$&$0.8$ \\[2mm]
Source term coefficient $\left(\frac{1}{10^{4}\rm{s}}\right)$&$b_X$&$0.16$ \\[2mm]
Time constant of the human's effect $\left({100\rm{s}}\right)$& $\frac{1}{a}$&$10$\\[2mm]
Equilibrium concentration at the air return (ppm)& $U_{\rm e}$&$370$\\
\hline
\end{tabular}}
\label{table3}
\end{center}
\end{table}%
\begin{figure}[t]
\centering
\includegraphics[width=4in]{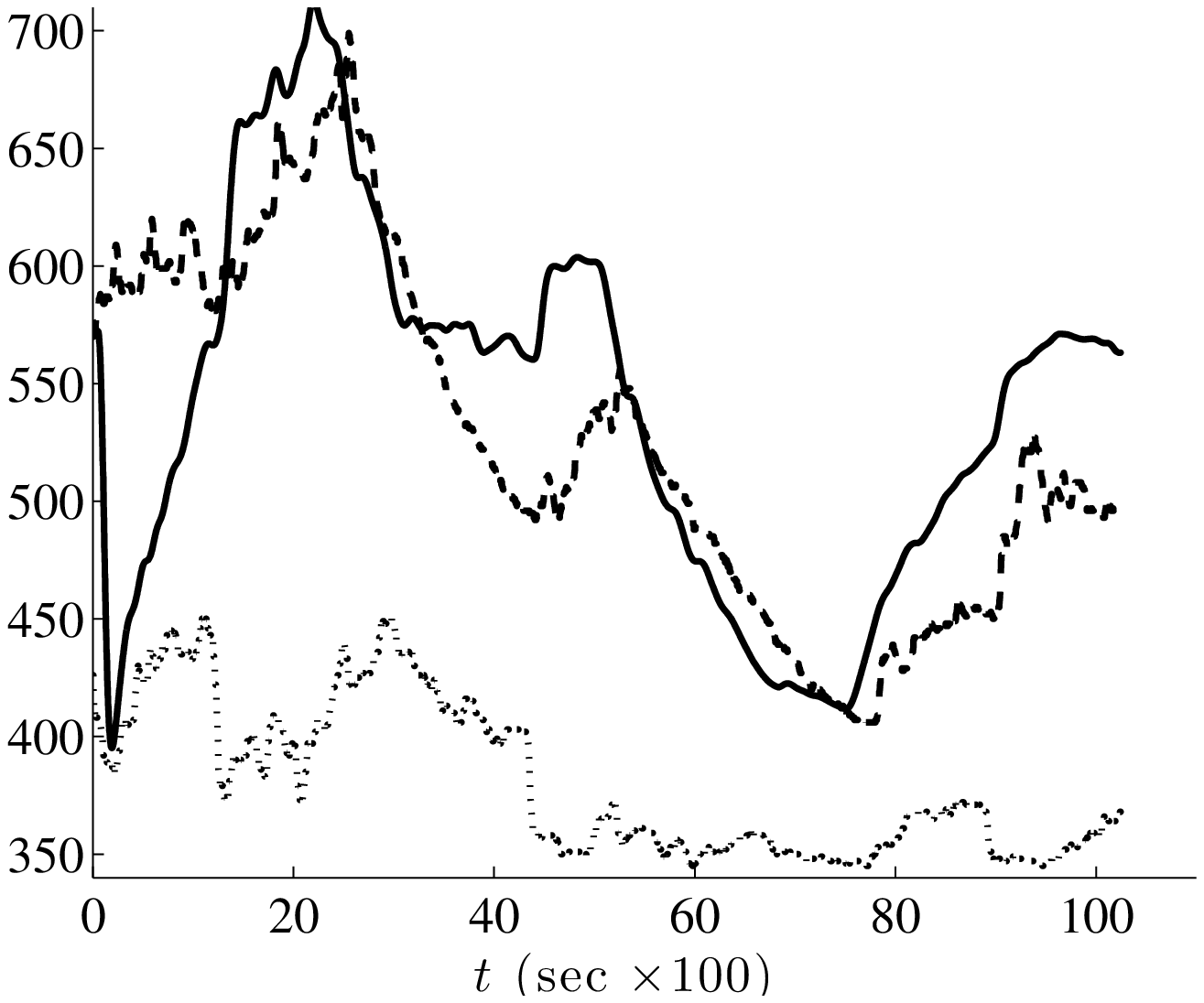}
\caption{Solid line: The simulated concentration of the \cotwo at the air return $u(1,t)$ given by the model (\ref{syss2})--(\ref{syss}) for Experiment II. Dashed line: The concentration of the \cotwo at the air return measured by the \cotwo sensor. Dotted line: The concentration of \cotwo at the air supply measured by the \cotwo sensor.}
\begin{picture}(0, 0) 
 \thicklines
\put(-140, 110){\begin{sideways}\footnotesize  \cotwo {concentration} (ppm)\end{sideways}}
\end{picture}
\label{fig5}
\end{figure}
\begin{figure}[t]
\centering
\includegraphics[width=4in]{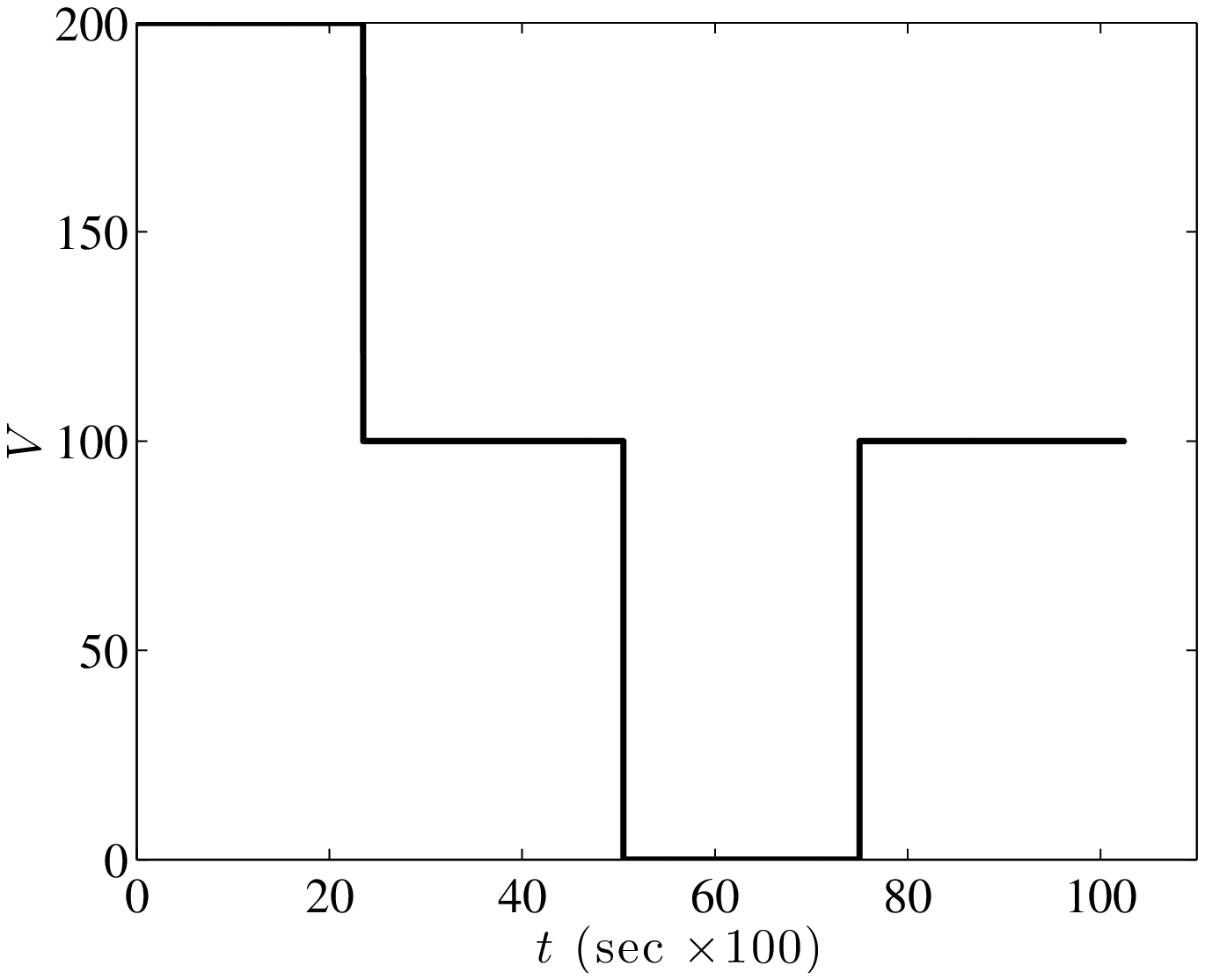}
\begin{picture}(0, 0) 
 \thicklines
\put(-285, 124){\begin{sideways}\footnotesize(ppm/s)\end{sideways}}
\put(-256, 215){\footnotesize \textcolor{blue}{Two humans}}
\put(-256, 205){\footnotesize \textcolor{blue}{in room}}
\put(-205, 128){\footnotesize \textcolor{blue}{One human}}
\put(-205, 118){\footnotesize \textcolor{blue}{in room}}
\put(-150, 38){\footnotesize \textcolor{blue}{No humans}}
\put(-150, 28){\footnotesize \textcolor{blue}{in room}}
\put(-103, 128){\footnotesize \textcolor{blue}{One human}}
\put(-103, 118){\footnotesize \textcolor{blue}{in room}}
\end{picture}
\caption{The input $V$ to the model (\ref{syss2})--(\ref{syss}) from Experiment II modeling the input concentration of \cotwo from the humans.}
\label{fig6}
\end{figure}



\section{Estimation of the Humans' Effect}
\label{secobs}
We construct an observer for the plant (\ref{syss2})--(\ref{syss}) assuming measurements of $u(1,t)$ and $U(t)$. We assume that the parameters of the model are known, since they can either be manually identified (as in Section \ref{secmodel}), or they can be identified using parameter identifiers (as in Section \ref{sec ide}). 
\subsection{Observer Design}
We consider the following observer which is a copy of the plant plus output injection
\begin{eqnarray}
\hat{u}_t(x,t)&=&b\hat{u}_x(x,t)+b_X\hat{X}(t)+p(x)\left(u(1,t)-\hat{u}(1,t)\right)\label{obs1}\\
\hat{u}(0,t)&=&-U(t)+2U_{\rm e}\\
\dot{\hat{X}}(t)&=&-a\hat{X}(t)+\hat{V}(t)+L_1\left(u(1,t)-\hat{u}(1,t)\right)\\
\dot{\hat{V}}(t)&=&L_2\left(u(1,t)-\hat{u}(1,t)\right).\label{obsn}
\end{eqnarray}

The following corollary is a consequence of Theorem 2 in \cite{bekiaris}.

\begin{corollary}
\label{thm1}
Consider the system (\ref{syss2})--(\ref{syss}) and the observer (\ref{obs1})--(\ref{obsn}) with
\begin{eqnarray}
p(x)&=&L_1\gamma_1(x)+L_2\gamma_2(x)\\
\gamma_1(x)&=&\frac{b_X}{a}\left(e^{-\frac{a}{b}x}-1\right)\label{defg1}\\
\gamma_2(x)&=&-\frac{b_X}{ba}x+\frac{b_X}{a^2}\left(1-e^{-\frac{a}{b}x}\right).\label{defg2}
\end{eqnarray}
Let $b_X\neq0$ and choose $L_1$, $L_2$ such that the matrix $A-\left[\begin{array}{ll}L_1\\L_2\end{array}\right]C$, where
\begin{eqnarray}
A&=&\left[\begin{array}{ll}-a&1\\0&0\end{array}\right]\label{defa}\\
C&=&\left[\begin{array}{ll}\gamma_1(1)&\gamma_2(1)\end{array}\right],\label{defc}
\end{eqnarray}
is Hurwitz. Then for any $u(x,0),\hat{u}(x,0)\in L_2(0,1)$, $X(0),\hat{X}(0),V(0),\hat{V}(0)\in\mathbb{R}$, there exist positive constants $\kappa$ and $\lambda$ such that the following holds for all $t\geq0$
\begin{eqnarray}
\Omega(t)&\leq&\kappa\Omega(0)e^{-\lambda t}\\
\Omega(t)&=&\int_0^1\left(u(x,t)-\hat{u}(x,t)\right)^2dx+\left(X(t)-\hat{X}(t)\right)^2+\left(V(t)-\hat{V}(t)\right)^2.
\end{eqnarray}
\end{corollary}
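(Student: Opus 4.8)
The plan is to reduce the observer-error dynamics to the cascade structure that underlies Theorem 2 in \cite{bekiaris} and then exhibit an explicit Lyapunov functional. First I would introduce the error variables $\tilde u = u-\hat u$, $\tilde X = X-\hat X$, $\tilde V = V-\hat V$. Subtracting (\ref{obs1})--(\ref{obsn}) from (\ref{syss2})--(\ref{syss}) and using $u(0,t)=2U_{\rm e}-U(t)=\hat u(0,t)$, the error system is
\[
\tilde u_t = b\tilde u_x + b_X\tilde X - p(x)\tilde u(1,t),\qquad \tilde u(0,t)=0,
\]
\[
\dot{\tilde X} = -a\tilde X + \tilde V - L_1\tilde u(1,t),\qquad \dot{\tilde V} = -L_2\tilde u(1,t).
\]
Writing $\zeta=[\tilde X,\ \tilde V]^{\top}$, the ODE part is $\dot\zeta = A\zeta - L\tilde u(1,t)$ with $A$ as in (\ref{defa}) and $L=[L_1,\ L_2]^{\top}$; the only coupling with the PDE is through the boundary trace $\tilde u(1,t)$, which is exactly the distributed-measurement structure treated in \cite{bekiaris}.

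Second, I would remove the interior coupling via the invertible change of variable $w(x,t) = \tilde u(x,t) - \gamma_1(x)\tilde X(t) - \gamma_2(x)\tilde V(t)$. Differentiating in time, substituting the error dynamics, and grouping terms, the coefficients of $\tilde X$, $\tilde V$, and $\tilde u(1,t)$ vanish identically precisely when $\gamma_1,\gamma_2$ satisfy $b\gamma_1' + a\gamma_1 + b_X = 0$ and $b\gamma_2' = \gamma_1$ with $\gamma_1(0)=\gamma_2(0)=0$ (the latter making $w(0,t)=0$), and when $p = L_1\gamma_1 + L_2\gamma_2$; solving these ODEs gives exactly (\ref{defg1}), (\ref{defg2}) and the prescribed $p(x)$. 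The resulting target system is the pure transport equation $w_t = bw_x$, $w(0,t)=0$, feeding the ODE $\dot\zeta = (A-LC)\zeta - Lw(1,t)$ with $C=[\gamma_1(1),\ \gamma_2(1)]$ as in (\ref{defc}). Here the hypothesis $b_X\neq 0$ is what makes $\gamma_1,\gamma_2$ nontrivial, hence $C\neq 0$, so that $(A,C)$ can be rendered observable and $L$ with $A-LC$ Hurwitz exists; this explains the standing assumptions.

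Third, I would prove exponential stability in the $(w,\zeta)$ coordinates. Since $b<0$, for any $\beta>0$ the transport subsystem satisfies $\frac{d}{dt}\int_0^1 e^{-\beta x}w^2\,dx = -|b|\beta\int_0^1 e^{-\beta x}w^2\,dx - |b|e^{-\beta}w(1,t)^2$, so it decays exponentially (in fact it vanishes for $t\ge 1/|b|$) and, crucially, supplies a negative $w(1,t)^2$ margin. Taking $P=P^{\top}>0$ with $(A-LC)^{\top}P + P(A-LC) = -I$ and the composite functional $\Theta = \int_0^1 e^{-\beta x}w^2\,dx + \theta\,\zeta^{\top}P\zeta$, a Young inequality on the cross term $\zeta^{\top}PLw(1,t)$ together with $\theta$ chosen small enough that the induced $w(1,t)^2$ term is dominated by $|b|e^{-\beta}w(1,t)^2$ yields $\dot\Theta \le -\mu\Theta$ for some $\mu>0$. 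Since $\gamma_1,\gamma_2$ are continuous on $[0,1]$, the transformation and its inverse are bounded, so $\Theta$ is equivalent to $\|\tilde u\|^2+\tilde X^2+\tilde V^2=\Omega$, giving $\Omega(t)\le\kappa\Omega(0)e^{-\lambda t}$. (Equivalently, one simply checks that the hypotheses of Theorem 2 in \cite{bekiaris} hold under $b_X\neq 0$ and $A-LC$ Hurwitz and quotes it directly.)

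The main obstacle I anticipate is the bookkeeping of the boundary trace $w(1,t)$: the transport equation by itself does not control $w(1,t)$, so the exponential weight $e^{-\beta x}$ — rather than an unweighted $L_2$ norm, which would produce a sign-indefinite interior term — is what generates a strictly dissipative $w(1,t)^2$ contribution able to absorb the cross term from the ODE. The only other delicate point is verifying that the cancellations induced by the specific $\gamma_1,\gamma_2$ in (\ref{defg1})--(\ref{defg2}) and $p=L_1\gamma_1+L_2\gamma_2$ are exact, reducing the error system to a clean cascade; this is a direct but somewhat tedious substitution.
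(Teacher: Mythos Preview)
Your proposal is correct and ultimately rests on the same device as the paper: reduce the error system, via the shift $w=\tilde u-\gamma_1\tilde X-\gamma_2\tilde V$, to the cascade treated by Theorem~2 in \cite{bekiaris}, and conclude. The difference is only in emphasis. The paper's own proof is two lines: it merely checks that $(A,C)$ is observable when $b_X\neq 0$ by computing $\det O=\gamma_1(1)\bigl(\gamma_1(1)+a\gamma_2(1)\bigr)$ and noting this equals $-\gamma_1(1)\,b_X/b\neq 0$, and then invokes the theorem. You instead reconstruct the content of that theorem (the backstepping change of variable, the weighted $L_2$ functional $\int_0^1 e^{-\beta x}w^2\,dx$ that produces a strictly negative $w(1,t)^2$ term, and the composite Lyapunov function with $P$ solving the Lyapunov equation for $A-LC$). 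That extra detail is not needed for the corollary, but it is exactly what underlies the cited result, so nothing is wrong.

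One small imprecision: ``$C\neq 0$, so $(A,C)$ can be rendered observable'' is not a valid inference in general---with this particular $A$, observability fails whenever $\gamma_1(1)=0$ regardless of $\gamma_2(1)$, since then $CA=0$. The paper closes this by explicitly computing the determinant of the observability matrix. In your write-up you should either do that computation or simply note that the corollary already \emph{assumes} $A-LC$ is Hurwitz, so the observability discussion is only about non-vacuousness of the hypothesis.
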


\begin{proof}
It is sufficient to show that if $b_X\neq0$ then the pair $\left(A,C\right)$ is observable (in which case one can choose $L_1$ and $L_2$ such that the matrix $A-\left[\begin{array}{ll}L_1\\L_2\end{array}\right]C$ is Hurwitz), which allow one to then use Theorem 2 in \cite{bekiaris}. The determinant of the observability matrix $O$ of the pair $(A,C)$ is $\textrm{det}(O)=\gamma_1(1)\left(\gamma_1(1)+a\gamma_2(1)\right)$. Using (\ref{defg1}), (\ref{defg2}) it follows that $\rm{det}\neq0$ whenever $b_X\neq0$.
\end{proof}

\section{Online Parameter Identification}
\label{sec ide}
We design swapping identifiers (see \cite{ioannou}, \cite{krstic} for the case of ODEs and \cite{moura}, \cite{adaptive pde} for the case of parabolic PDEs) for online identification of the parameters $b$, $b_X$ and $a$. We now assume that the ODE and PDE states are measured. Directly measuring these quantities in an actual implementation might be impractical. Yet, our online parameter identifiers can be in principle combined with a state-estimation algorithm in order to simultaneously perform state estimation and parameter identification, i.e., in order to design an adaptive observer (although, as it is discussed in Section \ref{concl}, for PDE systems this is highly nontrivial and there is no systematic approach for such a design). 
\subsection{Identifier Design}
 We deal first with the identification of $b$ and $b_X$. Define the ``estimation" error
\begin{eqnarray}
e(x,t)=u(x,t)-bv(x,t)-b_Xp(x,t)-\eta(x,t),\label{errordi}
\end{eqnarray}
between the measured state $u$ and the signals $v$, $p$, $\eta$, where $v$ is a filter for $u_x$, $p$ a filter for $X$ and $\eta$ is an input filter, given by
\begin{eqnarray}
v_t(x,t)&=&\hat{b}v_x(x,t)+u_x(x,t)\label{id1}\\
v(0,t)&=&0\\
p_t(x,t)&=&\hat{b}p_x(x,t)+X(t)\label{id2}\\
p(0,t)&=&0\label{id2s}\\
\eta_t(x,t)&=&\hat{b}\eta_x(x,t)-\hat{b}u_x(x,t)\\
\eta(0,t)&=&-U(t)+2U_e.\label{idnn}
\end{eqnarray}
The goal of the filters (\ref{id1})--(\ref{idnn}) is to convert the dynamic parametrization of the plant into a static one. This is the main attribute of the swapping identification method \cite{ioannou}, \cite{krstic}, \cite{adaptive pde}. Using the static relationship (\ref{errordi}) as a parametric model and defining the ``prediction error"
\begin{eqnarray}
\hat{e}(x,t)=u(x,t)-\hat{b}v(x,t)-\hat{b}_Xp(x,t)-\eta(x,t),\label{errordie}
\end{eqnarray}
the identifiers for $b$ and $b_X$ are given by the following gradient update laws with normalization
\begin{eqnarray}
\dot {\hat{b}}(t)&=&-\gamma_1\hat{b}(t)\textrm{Proj}_{\bar{b}}\left\{\frac{\int_0^1\hat{e}(x,t)v(x,t)dx}{1+\int_0^1v(x,t)^2dx+\int_0^1p(x,t)^2dx}\right\}\label{idnup}\\
\dot {\hat{b}}_X(t)&=&-\gamma_2\hat{b}(t)\frac{\int_0^1\hat{e}(x,t)p(x,t)dx}{1+\int_0^1v(x,t)^2dx+\int_0^1p(x,t)^2dx},\label{idn1up}
\end{eqnarray}
where the projector operator is defined as
\begin{eqnarray}
\mbox{Proj}_{\bar{b}}\{\tau\}&=& \left\{ \begin{array}{cc}0,& \mbox{if $\hat{b}=\bar{b}$ and $\tau>0$}   \\ \tau, & \mbox{otherwise} \end{array} \right.. \label{projector1}
\end{eqnarray}
and $\gamma_1,\gamma_2>0$, $\bar{b}<0$. The goal of the projection operator is to ensure that $\hat{b}<\bar{b}$. We design next an online identifier for $a$. Define the filters 
\begin{eqnarray}
\dot{\Omega}_0(t)&=&\bar{A}\left(\Omega_0(t)-X(t)\right)+V(t)\label{idnew1}\\
\dot{\Omega}(t)&=&\bar{A}\Omega(t)-X(t),\label{idnewn}
\end{eqnarray}
where $\bar{A}<0$. Defining the error
\begin{eqnarray}
\epsilon(t)=X(t)-\Omega_0(t)-\Omega(t){a},\label{error2}
\end{eqnarray}
the identifier for $a$ is
\begin{eqnarray}
\dot{\hat{a}}(t)&=&\gamma_3\frac{\hat{\epsilon}(t)\Omega(t)}{1+\Omega(t)^2}\label{idn2up}\\
\hat{\epsilon}(t)&=&X(t)-\Omega_0(t)-\Omega(t)\hat{a}(t),\label{error23}
\end{eqnarray}
where $\gamma_3>0$. Defining the estimation error of a parameter $m$ as
\begin{eqnarray}
\tilde{m}=m-\hat{m},
\end{eqnarray}
the following can be proved.

\begin{theorem}
\label{theorem2}
Consider system (\ref{syss2})--(\ref{syss}) with the update laws (\ref{id1})--(\ref{projector1}), (\ref{idnew1}), (\ref{idnewn}), (\ref{idn2up}), (\ref{error23}), and let $b<\bar{b}$, where $\bar{b}$ is known. Then for all $V(0)$, $X(0)$, $\hat{b}(0)$, $\hat{b}_X(0)$, $\hat{a}(0)$, $\Omega(0)$, $\Omega_0(0)$, $\in\mathbb{R}$, $v(x,0)$, $p(x,0)$, $\eta(x,0)$ $\in L_2(0,1)$, $u(x,0)\in H_1(0,1)$, and $U,\dot{U}\in\mathcal{L}_{\infty}$,
\begin{eqnarray}
&&\|u_x\|,\|u\|,V,X,\|v\|,\|p\|, \|\eta\|,\Omega,\Omega_0,\tilde{b}, \tilde{b}_X, \tilde{a}\in\mathcal{L}_{\infty}, \\
&&\dot{\hat{b}}, \dot{\hat{b}}_X, \dot{\hat{a}},\frac{\|\hat{e}\|}{\sqrt{1+\|v\|^2+\|p\|^2}}, \frac{|\hat{\epsilon}|}{\sqrt{1+\Omega^2}}\in\mathcal{L}_{2}\cap\mathcal{L}_{\infty}.
\end{eqnarray}
Moreover, if $\lim_{t\to\infty}V(t)\to c$, with $c\neq0$, then $\lim_{t\to\infty}\hat{a}(t)\to a$.
\end{theorem}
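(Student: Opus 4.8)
The plan is to run the standard three-stage argument for swapping (gradient) identifiers, adapted to the coupled transport-PDE/ODE structure: first turn the two dynamic parametrizations into static ones and show the associated ``modelling errors'' decay; then carry out a normalized-gradient Lyapunov analysis to obtain the parameter-error bounds and the $\mathcal{L}_2$ properties, bootstrapping these into boundedness of all remaining signals; finally invoke persistency of excitation for the convergence claim.

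For the first stage, differentiating the estimation error \eqref{errordi} and using \eqref{sys1new}, \eqref{id1}, \eqref{id2} and the $\eta$-equation in \eqref{idnn}, I expect $e$ to satisfy the pure transport equation $e_t=\hat b e_x$, $e(0,t)=0$, the boundary term cancelling because $u(0,t)=2U_{\rm e}-U(t)=\eta(0,t)$. An energy estimate then gives $\frac{d}{dt}\|e\|^2=\hat b\,e(1,t)^2\le 0$, and --- crucially --- since the projection \eqref{projector1} keeps $\hat b<\bar b<0$, the transport speed $-\hat b$ is bounded below by $-\bar b>0$, so every characteristic reaches $x=0$ within time $1/|\bar b|$; hence $e(x,t)\equiv 0$ for $t\ge 1/|\bar b|$, and in particular $\|e\|\in\mathcal{L}_\infty\cap\mathcal{L}_2$ with no further assumptions. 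Similarly, differentiating \eqref{error2} and using \eqref{syss2}, \eqref{idnew1}, \eqref{idnewn} should give $\dot\epsilon=\bar A\epsilon$ with $\bar A<0$, so $\epsilon$ decays exponentially. Subtracting \eqref{errordi} from \eqref{errordie} and \eqref{error2} from \eqref{error23} yields the static parametric models $\hat e=e+\tilde b\,v+\tilde b_X\,p$ and $\hat\epsilon=\epsilon+\tilde a\,\Omega$.

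For the Lyapunov stage, take $W_a=\tilde a^2/(2\gamma_3)$: using $\dot{\tilde a}=-\dot{\hat a}$, \eqref{idn2up} and $\hat\epsilon=\epsilon+\tilde a\Omega$, and completing the square on the cross term, I expect $\dot W_a\le-\frac12\frac{\tilde a^2\Omega^2}{1+\Omega^2}+\frac12\epsilon^2$, which, since $\epsilon\in\mathcal{L}_2$, gives $\tilde a\in\mathcal{L}_\infty$ and $\tilde a\Omega/\sqrt{1+\Omega^2}\in\mathcal{L}_2$. Take $W_b=\tilde b^2/(2\gamma_1)+\tilde b_X^2/(2\gamma_2)$: using \eqref{idnup}, \eqref{idn1up}, the projection inequality $\tilde b\,\hat b\,\mathrm{Proj}_{\bar b}\{g\}\le\tilde b\,\hat b\,g$ (which is where $b<\bar b$ enters), the identity $\tilde b v+\tilde b_X p=\hat e-e$, and completing the square, I expect $\dot W_b\le\frac{\hat b}{2n^2}\|\hat e\|^2+\frac{|\hat b|}{2n^2}\|e\|^2$ with $n^2:=1+\|v\|^2+\|p\|^2$; using again that $e\equiv 0$ for $t\ge 1/|\bar b|$ and $\hat b<\bar b<0$, this reduces to $\dot W_b\le\frac{\bar b}{2n^2}\|\hat e\|^2\le 0$ on $[1/|\bar b|,\infty)$, yielding $\tilde b,\tilde b_X\in\mathcal{L}_\infty$ and $\|\hat e\|/n\in\mathcal{L}_2$ --- once finite escape on $[0,1/|\bar b|]$ is ruled out, which follows from a routine local-existence/Gronwall argument since on that bounded interval $u,u_x,v,p,\eta$ are driven by bounded data and the regressors grow at most polynomially in the estimates. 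With the parameter errors bounded I then bootstrap: $V\in\mathcal{L}_\infty$ from $\dot V=0$; $X\in\mathcal{L}_\infty$ from \eqref{syss2} (with $a>0$ as in the model); then, setting $w:=u_x$, observe $w_t=bw_x$ with $w(0,t)=(-\dot U(t)-b_XX(t))/b\in\mathcal{L}_\infty$ and $b<0$, so $\|u_x\|\in\mathcal{L}_\infty$ and hence $\|u\|\in\mathcal{L}_\infty$ from \eqref{sys1new}; the filters \eqref{id1}--\eqref{idnn}, being stable transport equations (speed $\hat b<\bar b<0$) driven by $u_x,X,\hat b u_x$, give $\|v\|,\|p\|,\|\eta\|\in\mathcal{L}_\infty$, and \eqref{idnew1}, \eqref{idnewn} give $\Omega,\Omega_0\in\mathcal{L}_\infty$; then $\hat e,\hat\epsilon$ are bounded, the normalized errors $\|\hat e\|/n$ and $|\hat\epsilon|/\sqrt{1+\Omega^2}$ lie in $\mathcal{L}_2\cap\mathcal{L}_\infty$, and by \eqref{idnup}, \eqref{idn1up}, \eqref{idn2up} each of $\dot{\hat b},\dot{\hat b}_X,\dot{\hat a}$ is bounded by a constant times one of these, hence lies in $\mathcal{L}_2\cap\mathcal{L}_\infty$.

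For the convergence claim, if $V(t)\to c\neq 0$ then $X(t)\to c/a\neq 0$ by \eqref{syss2} and $\Omega(t)\to-c/(a\bar A)\neq 0$ by \eqref{idnewn}, so $\Omega^2/(1+\Omega^2)\ge\delta>0$ for all $t\ge T$; combining this with $\tilde a\Omega/\sqrt{1+\Omega^2}\in\mathcal{L}_2$ gives $\tilde a\in\mathcal{L}_2([T,\infty))$, and since $\dot{\tilde a}=-\dot{\hat a}\in\mathcal{L}_\infty$, Barbalat's lemma yields $\tilde a(t)\to 0$, i.e.\ $\hat a(t)\to a$. I expect the main obstacle to be the $(b,b_X)$ identifier: because the filters use the current estimate $\hat b$ as the transport speed, the modelling-error PDE for $e$ has a time-varying, estimate-dependent coefficient, and the update laws \eqref{idnup}--\eqref{idn1up} carry the nonstandard multiplier $\hat b(t)$ --- together these leave an indefinite term in $\dot W_b$ that a naive computation cannot absorb. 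The resolution is precisely the finite-time vanishing of $e$ (a consequence of the projection keeping $|\hat b|$ bounded below), which removes that term after a bounded transient; combining this with the no-finite-escape argument on $[0,1/|\bar b|]$ and with $\hat b=b-\tilde b\in\mathcal{L}_\infty$ (which closes the loop in the bootstrapping) is the technical heart. The $a$-identifier Lyapunov step, the remaining bootstrapping, and the persistency-of-excitation step should be routine.
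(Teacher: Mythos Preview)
Your argument is correct and reaches the same conclusions, but the key Lyapunov step is organized differently from the paper. The paper does \emph{not} split the analysis into a transient phase plus finite-time vanishing of $e$. Instead it includes the weighted energy $\int_0^1(2-x)e(x,t)^2\,dx$ (and $\tfrac12\epsilon^2$) directly in a single Lyapunov function together with $\tilde b^2/(2\gamma_1)+\tilde b_X^2/(2\gamma_2)-\tfrac{\bar A}{2\gamma_3}\tilde a^2$. The point of the weight $(2-x)$ is that the transport part produces $\hat b\,e(1,t)^2+\hat b\,\|e\|^2$, and the troublesome cross term $-\tfrac{\hat b}{2}\|e\|^2/n^2\le -\tfrac{\hat b}{2}\|e\|^2$ that you isolate is then absorbed by the $\hat b\,\|e\|^2$ term \emph{for all} $t\ge 0$, yielding directly
\[
\dot V \le \bar b\,e(1,t)^2+\tfrac{\bar b}{2}\|e\|^2+\tfrac{\bar A}{2}\epsilon^2+\tfrac{\bar b}{2}\frac{\|\hat e\|^2}{n^2}+\tfrac{\bar A}{2}\frac{\hat\epsilon^2}{1+\Omega^2},
\]
with no transient to treat separately and no appeal to finite-time extinction. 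For the remaining boundedness the paper again uses a single auxiliary functional $V^*=\int_0^1(2-x)\bigl(u^2+u_x^2+\tfrac{b\bar b}{16}v^2+p^2\bigr)dx$, obtaining $\dot V^*\le \tfrac{\bar b}{4}V^*+M(X^2+U^2+\dot U^2+U_{\rm e}^2)$, rather than your sequential bootstrap. For convergence of $\hat a$ the paper applies a Barbalat-type lemma to $G=\hat\epsilon^2/(1+\Omega^2)$ (showing $\dot G$ bounded), whereas you go through $\tilde a\Omega/\sqrt{1+\Omega^2}\in\mathcal L_2$ and eventual lower-boundedness of $\Omega$; both routes are fine. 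Your approach buys a more transparent use of the hyperbolic structure (the characteristics argument is appealing) at the cost of the extra no-finite-escape step on $[0,1/|\bar b|]$; the paper's weighted-Lyapunov route is more compact and avoids that two-phase split altogether.
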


\begin{proof}
See Appendix A.
\end{proof}

In the special case in which $b$ is known\footnote{One could estimate the convection coefficient $b$ by measuring the delay in the response of the \cotwo concentration at the air return $u(1,t)$ to changes in the input \cotwo concentration at the air supply $U(t)$, when no exogenous sources of \cotwo are present. This can be either performed manually, or using signal processing techniques \cite{signal}, using the data that we collect from the two experiments.}, one does not need to design an update law for $b$ and the following can be proved. 

\begin{lemma}
\label{1}
Let $b$ be known. Consider the update law for $a$ given by (\ref{idn2up}) and the update law for $b_X$ given by
\begin{eqnarray}
\dot {\hat{b}}_X(t)&=&\gamma\frac{\int_0^1\hat{\zeta}(x,t)\mu(x,t)dx}{1+\int_0^1\mu(x,t)^2dx}\label{idn1upbx},
\end{eqnarray}
where
\begin{eqnarray}
\hat{\zeta}(x,t)&=&u(x,t)-\hat{b}_X\mu(x,t)-\xi(x,t)\label{errordibxest}\\
\mu_t(x,t)&=&b\mu_x(x,t)+X(t)\label{id2bx}\\
\mu(0,t)&=&0\label{id2sbx}\\
\xi_t(x,t)&=&b\xi_x(x,t)\\
\xi(0,t)&=&-U(t)+2U_e.\label{idnnbx}
\end{eqnarray}
Then for all $V(0)$, $X(0)$, $\hat{b}_X(0)$, $\hat{a}(0)$, $\Omega(0)$, $\Omega_0(0)$, $\in\mathbb{R}$, $u(x,0)$, $\mu(x,0)$, $\xi(x,0)$ $\in L_2(0,1)$, and $U\in\mathcal{L}_{\infty}$,
\begin{eqnarray}
&&\|u\|,V,X,\|\mu\|,\|\xi\|,\Omega,\Omega_0,\tilde{b}_X, \tilde{a}\in\mathcal{L}_{\infty},\quad \mbox{and}\quad \dot{\hat{b}}_X, \dot{\hat{a}}\in\mathcal{L}_{2}\cap\mathcal{L}_{\infty},\\
&&\frac{\|\hat{\zeta}\|}{\sqrt{1+\|\mu\|^2}}, \frac{|\hat{\epsilon}|}{\sqrt{1+\Omega^2}}\in\mathcal{L}_{2}\cap\mathcal{L}_{\infty}.
\end{eqnarray}
Moreover, if $V(0)=c$ with $c\neq0$, then $\lim_{t\to\infty}\hat{b}_X(t)\to b_X$ and $\lim_{t\to\infty}\hat{a}(t)\to a$.
\end{lemma}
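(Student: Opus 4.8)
The plan is to exploit the fact that, with $b$ known, the identification scheme is fully cascaded: the plant (\ref{syss2})--(\ref{syss}) and every filter $\mu,\xi,\Omega_0,\Omega$ are driven only by $U$, $V$ and $X$, none of which depends on $\hat{b}_X$ or $\hat{a}$, and the two update laws do not feed back into each other. Accordingly I would (i) establish a priori boundedness of all non-adapted signals, (ii) rewrite the prediction errors in a static parametric form, (iii) run two independent normalized-gradient Lyapunov estimates, and (iv) add a persistency-of-excitation step for the convergence claims. This is a streamlined version of the proof of Theorem~\ref{theorem2}; in particular, since $\hat{b}$ no longer appears inside the filters (\ref{id1})--(\ref{idnn}), no $H_1$ estimate of $u_x$ is required here.

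For the a priori bounds, $\dot{V}=0$ gives $V(t)\equiv V(0)$, hence $X(t)=e^{-at}X(0)+\tfrac{V(0)}{a}\left(1-e^{-at}\right)\in\mathcal{L}_{\infty}$ because $a>0$. The transport equations (\ref{sys1new}) and (\ref{id2bx})--(\ref{idnnbx}) are integrated along characteristics on $[0,1]$ with speed $-b>0$, so $x=0$ is the inflow boundary and each filter has finite memory $\tfrac{1}{-b}$; reading off the explicit solution, $\|u\|,\|\mu\|,\|\xi\|\in\mathcal{L}_{\infty}$ follow from $X,U\in\mathcal{L}_{\infty}$ and the $L_2$ initial data, while $\Omega,\Omega_0\in\mathcal{L}_{\infty}$ follow by applying $\bar{A}<0$ to the stable linear ODEs (\ref{idnew1}), (\ref{idnewn}) with bounded inputs.

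Next I would set $\zeta:=u-b_X\mu-\xi$, the error built with the \emph{true} parameter. Differentiating and using (\ref{sys1new}), (\ref{id2bx})--(\ref{idnnbx}), the forcing terms cancel and $\zeta_t=b\zeta_x$, $\zeta(0,t)=0$; hence $\zeta(\cdot,t)\equiv0$ for $t\ge\tfrac{1}{-b}$ and $\|\zeta(\cdot,t)\|\le\|\zeta(\cdot,0)\|$ for all $t$, so $\|\zeta\|\in\mathcal{L}_{1}\cap\mathcal{L}_{2}\cap\mathcal{L}_{\infty}$ and $\hat{\zeta}=\tilde{b}_X\mu+\zeta$. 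The same computation on $\epsilon:=X-\Omega_0-\Omega a$ gives $\dot{\epsilon}=\bar{A}\epsilon$, so $\epsilon$ decays exponentially and $\hat{\epsilon}=\epsilon+\Omega\tilde{a}$. With $W_1=\tfrac{1}{2\gamma}\tilde{b}_X^2$, from $\dot{\tilde{b}}_X=-\dot{\hat{b}}_X$, $\hat{\zeta}=\tilde{b}_X\mu+\zeta$ and $\tfrac{\|\mu\|}{1+\|\mu\|^2}\le\tfrac12$ one obtains
\[
\dot{W}_1\le-\frac{\tilde{b}_X^2\|\mu\|^2}{1+\|\mu\|^2}+\tfrac12\,|\tilde{b}_X|\,\|\zeta\|,
\]
so $\tfrac{d}{dt}\sqrt{W_1}\le\tfrac{\sqrt{2\gamma}}{4}\|\zeta\|$ with $\|\zeta\|\in\mathcal{L}_{1}$; this gives $\tilde{b}_X\in\mathcal{L}_{\infty}$ and, using $\zeta\equiv0$ for large $t$, $\int_0^{\infty}\tfrac{\tilde{b}_X^2\|\mu\|^2}{1+\|\mu\|^2}\,dt<\infty$. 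The identical estimate with $W_2=\tfrac{1}{2\gamma_3}\tilde{a}^2$ and the exponentially decaying residual $\epsilon$ yields $\tilde{a}\in\mathcal{L}_{\infty}$ and $\int_0^{\infty}\tfrac{\tilde{a}^2\Omega^2}{1+\Omega^2}\,dt<\infty$. The remaining regularity claims --- $\tfrac{\|\hat{\zeta}\|}{\sqrt{1+\|\mu\|^2}}$, $\tfrac{|\hat{\epsilon}|}{\sqrt{1+\Omega^2}}$, $\dot{\hat{b}}_X$, $\dot{\hat{a}}\in\mathcal{L}_{2}\cap\mathcal{L}_{\infty}$ --- then follow from $\hat{\zeta}=\tilde{b}_X\mu+\zeta$, $\hat{\epsilon}=\epsilon+\Omega\tilde{a}$, Cauchy--Schwarz, and $\tfrac{\|\mu\|}{\sqrt{1+\|\mu\|^2}},\tfrac{|\Omega|}{\sqrt{1+\Omega^2}}\le1$.

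Finally, for convergence: if $V(0)=c\neq0$ then $V\equiv c$ and $X(t)\to\tfrac{c}{a}\neq0$, whence $\Omega(t)\to\tfrac{c}{a\bar{A}}\neq0$ and $\mu(\cdot,t)$ converges to the steady profile $\mu_{\infty}(x)=-\tfrac{c}{ab}x$ with $\|\mu_{\infty}\|^2=\tfrac{c^2}{3a^2b^2}>0$; hence $\tfrac{\Omega^2}{1+\Omega^2}$ and $\tfrac{\|\mu\|^2}{1+\|\mu\|^2}$ are bounded below by positive constants for $t$ large, which combined with the two integral bounds forces $\tilde{a},\tilde{b}_X\in\mathcal{L}_{2}$. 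Since $\dot{\tilde{a}}=-\dot{\hat{a}}$ and $\dot{\tilde{b}}_X=-\dot{\hat{b}}_X$ lie in $\mathcal{L}_{\infty}$, Barbalat's lemma then gives $\hat{a}\to a$ and $\hat{b}_X\to b_X$. The step I expect to require the most care is the handling of the transport filters with only $L_2$ data: verifying that $\zeta$ is \emph{exactly} zero past the finite transit time $\tfrac{1}{-b}$ (and $L_2$-bounded before it), and that $\|\mu(\cdot,t)\|$ stays bounded away from zero, i.e.\ that $\mu$ genuinely tends to the nonzero profile $-\tfrac{c}{ab}x$. Everything else is the standard normalized-gradient bookkeeping, which is lighter here than in Theorem~\ref{theorem2}.
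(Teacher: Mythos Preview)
Your proposal is correct and follows the same overall architecture as the paper's proof: derive the static parametric relation $\hat{\zeta}=\zeta+\tilde{b}_X\mu$ with $\zeta$ governed by the homogeneous transport problem $\zeta_t=b\zeta_x$, $\zeta(0,t)=0$, run a normalized-gradient Lyapunov argument for $\tilde{b}_X$ (and the parallel one for $\tilde{a}$), then finish with a persistency-of-excitation step once $V\equiv c\neq0$.

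The execution differs in two places worth recording. For boundedness, you exploit the finite transit time of the transport equation to get $\zeta\equiv0$ for $t\ge\tfrac{1}{-b}$, hence $\|\zeta\|\in\mathcal{L}_1$, and then bound $\sqrt{W_1}$ with $W_1=\tfrac{1}{2\gamma}\tilde{b}_X^2$; the paper instead folds a weighted term $-\tfrac{1}{b}\int_0^1(2-x)\zeta^2\,dx$ into a single Lyapunov function and never invokes the explicit characteristics solution. For convergence of $\hat{b}_X$, you argue that $\mu(\cdot,t)\to\mu_\infty(x)=-\tfrac{c}{ab}x$ in $L_2$, so $\tfrac{\|\mu\|^2}{1+\|\mu\|^2}$ is eventually bounded below, which combined with $\int_0^\infty\tfrac{\tilde{b}_X^2\|\mu\|^2}{1+\|\mu\|^2}\,dt<\infty$ forces $\tilde{b}_X\in\mathcal{L}_2$; the paper reaches the same $\mathcal{L}_2$ conclusion by the algebraic identity $\zeta-\hat{\zeta}+\tilde{b}_X\bar{\mu}=\tfrac{c}{ab}x\,\tilde{b}_X$ with $\bar{\mu}=\mu-\mu_\infty$, after first showing $\|\bar{\mu}\|\in\mathcal{L}_2$ via a weighted Lyapunov estimate. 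Your route is a bit more elementary and makes the PE mechanism explicit; the paper's route stays closer to the standard adaptive-control template and sidesteps the minor technicality of differentiating $\sqrt{W_1}$ through a possible zero of $\tilde{b}_X$. Either way the ``careful step'' you flag, namely that $\bar{\mu}\to0$ so that $\|\mu\|$ is eventually bounded away from zero, is exactly the point where the paper inserts its auxiliary Lyapunov functional $V_1=\int_0^1(2-x)\bar{\mu}^2\,dx$.
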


\begin{proof}
See Appendix B.
\end{proof}

\section{Conclusions}
\label{concl}
In this article, we develop a PDE-ODE model that describes the dynamics of the \cotwo concentration in a conference room. We validate our model by conducting two different experiments. We design and validate an observer for the estimation of the unknown \cotwo input that is generated by humans. We also design online parameter identifiers for the online estimation of the parameters of the model.

Future work will address the problem of estimation of the actual human occupancy level using measurements of \cotwon. This is a highly nontrivial problem because humans' \cotwo generation rates can vary widely between different persons depending on current activity, diet, and body size \cite{persily}.

Another topic for future research is to combine the observer design with the update laws for the estimation of the parameters of the model. In other words, to design an adaptive observer \cite{ioannou}. Yet, in contrast to the finite-dimensional case, in the case of PDE systems this is far from trivial due to the lack of systematic procedures for the construction of state-transformations that can transform the original system to a system having an observer canonical form \cite{ioannou}, \cite{adaptive pde}. For this reason designing adaptive observers for PDE systems is possible only in special cases \cite{adaptive pde}. As an alternative one could resort to finite-dimensional approximations as it is done, for example, in \cite{moura adaptive observer}.


\section*{Acknowledgments}
The authors would like to thank William W. Nazaroff and Donghyun Rim for useful discussion and for pointing out some important literature, on the topics of indoor airflow modeling and of indoor contaminant source identification.







\setcounter{equation}{0}
\renewcommand{\theequation}{A.\arabic{equation}}
\section*{Appendix A}

\subsection*{Proof of Theorem \ref{theorem2}}
\label{proof thm1}
Using (\ref{sys1new}), (\ref{syss}) together with (\ref{id1}), (\ref{idnn}) one can show that the error (\ref{errordi}) satisfies 
\begin{eqnarray}
e_t(x,t)&=&\hat{b}e_x(x,t)\label{error eq}\\
e(0,t)&=&0.\label{error eq1}
\end{eqnarray}
Analogously, using (\ref{syss2}), along with (\ref{idnew1}), (\ref{idnewn}), it is shown that the error (\ref{error2}) satisfies
\begin{eqnarray}
\dot{\epsilon}(t)=\bar{A}\epsilon(t).\label{error eq2}
\end{eqnarray}
Consider the Lyapunov function 
\begin{eqnarray}
V(t)&=&\int_0^1(2-x)e(x,t)^2dx+\frac{1}{2}\epsilon(t)^2+\frac{1}{2\gamma_1}\tilde{b}(t)^2+\frac{1}{2\gamma_2}\tilde{b}_X(t)^2\nonumber\\
&&-\frac{\bar{A}}{2\gamma_3}\tilde{a}(t)^2.
\end{eqnarray}
Taking the derivative of $V$, using the fact that for a constant parameter $\dot{\tilde{m}}=-\dot{\hat{m}}$, the update laws (\ref{idnup}), (\ref{idn1up}), (\ref{idn2up}), the properties of the projector operator (see for example \cite{krstic}) and relations (\ref{error eq}), (\ref{error eq1}), (\ref{error eq2}) we get that
\begin{eqnarray}
\dot{V}(t)&\leq&\hat{b}(t)e(1,t)^2+\hat{b}(t)\int_0^1e(x,t)^2dx+\bar{A}\epsilon(t)^2+\hat{b}(t)\tilde{b}(t)\nonumber\\
&&\times\frac{\int_0^1\hat{e}(x,t)v(x,t)dx}{1+\int_0^1v(x,t)^2dx+\int_0^1p(x,t)^2dx}+\hat{b}(t)\tilde{b}_X(t)\nonumber\\
&&\times\frac{\int_0^1\hat{e}(x,t)p(x,t)dx}{1+\int_0^1v(x,t)^2dx+\int_0^1p(x,t)^2dx}+\bar{A}\tilde{a}(t)\frac{\hat{\epsilon}(t)\Omega(t)}{1+\Omega(t)^2}.
\end{eqnarray}
Using definitions (\ref{errordi}), (\ref{errordie}), and (\ref{error2}), (\ref{error23}) we get that 
\begin{eqnarray}
\hat{e}(x,t)&=&e(x,t)+\tilde{b}(t)v(x,t)+\tilde{b}_X(t)p(x,t)\label{errorsh}\\
\hat{\epsilon}(t)&=&\epsilon(t)+\tilde{a}(t)\Omega(t),\label{errorsh1}
\end{eqnarray}
and hence,
\begin{eqnarray}
\dot{V}(t)&\leq&\hat{b}(t)e(1,t)^2+\hat{b}(t)\int_0^1e(x,t)^2dx+\bar{A}\epsilon(t)^2\nonumber\\
&&+\hat{b}(t)\frac{\int_0^1\hat{e}(x,t)^2dx-\int_0^1\hat{e}(x,t)e(x,t)dx}{1+\int_0^1v(x,t)^2dx+\int_0^1p(x,t)^2dx}-\bar{A}\frac{-\hat{\epsilon}(t)^2+\hat{\epsilon}(t)\epsilon(t)}{1+\Omega(t)^2}.
\end{eqnarray}
Using Young's inequality and the fact that $\hat{b}(t)\leq\bar{b}<0$, for all $t\geq0$, we arrive at 
\begin{eqnarray}
\dot{V}(t)&\leq&\bar{b}e(1,t)^2+\frac{\bar{b}}{2}\int_0^1e(x,t)^2dx+\frac{\bar{A}}{2}\epsilon(t)^2+\nonumber\\
&&+\frac{\bar{b}}{2}\frac{\int_0^1\hat{e}(x,t)^2dx}{1+\int_0^1v(x,t)^2dx+\int_0^1p(x,t)^2dx}+\frac{\bar{A}}{2}\frac{\hat{\epsilon}(t)^2}{1+\Omega(t)^2}.\label{boundness}
\end{eqnarray}
Using relation (\ref{boundness}) we conclude that $\|e(t)\|$, $\epsilon$, $\tilde{b}$, $\tilde{b}_X$, $\tilde{a}$ are bounded, and that also $\frac{\sqrt{\int_0^1\hat{e}(x,t)^2dx}}{\sqrt{1+\int_0^1v(x,t)^2dx+\int_0^1p(x,t)^2dx}}$, $\frac{\left|\hat{\epsilon}(t)\right|}{\sqrt{1+\Omega(t)^2}}$ are square integrable. Therefore, using (\ref{idnup}), (\ref{idn1up}), (\ref{idn2up}), and the boundness of $\tilde{b}$ (which implies also the boundness of $\hat{b}$) one can conclude that $\dot{\hat{b}}$, $\dot{\hat{b}}_X$, $\dot{\hat{a}}$ are square integrable, and using (\ref{errorsh}), (\ref{errorsh1}) that $\frac{\|\hat{e}(t)\|}{\sqrt{1+\|v(t)\|^2+\|p(t)\|^2}}$ and $\frac{\left|\hat{\epsilon}(t)\right|}{\sqrt{1+\Omega(t)^2}}$ are bounded. Therefore, using the update laws (\ref{idnup}), (\ref{idn1up}), (\ref{idn2up}) one can conclude that $\dot{\hat{b}}$, $\dot{\hat{b}}_X$, $\dot{\hat{a}}$ are also bounded. Using relations (\ref{syss2}), (\ref{sys3new}), (\ref{idnew1}), (\ref{idnewn}) one can conclude that $V,X,\Omega,\Omega_0\in\mathcal{L}_{\infty}$. Using (\ref{sys1new}), (\ref{syss}) one can conclude that 
\begin{eqnarray}
u_{xt}(x,t)&=&bu_{xx}(x,t)\label{fuc1}\\
u_x(0,t)&=&-\frac{\dot{U}(t)+b_XX(t)}{b}.\label{fuc2}
\end{eqnarray}
With a Lyapunov functional as
\begin{eqnarray}
V^*(t)&=&\int_0^1(2-x)u(x,t)^2dx+\int_0^1(2-x)u_x(x,t)^2dx\nonumber\\
&&+\frac{b\bar{b}}{16}\int_0^1(2-x)v(x,t)^2dx+\int_0^1(2-x)p(x,t)^2dx,
\end{eqnarray}
we get along (\ref{sys1new}), (\ref{syss}), (\ref{fuc1}), (\ref{fuc2}), (\ref{id1}), (\ref{id2s}), after using integration by parts and Young's inequality that 
\begin{eqnarray}
\dot{V}^*(t)&\leq&\frac{b}{2}\int_0^1u(x,t)^2dx+ \frac{b}{2}\int_0^1u_x(x,t)^2dx+\frac{\bar{b}}{2}\int_0^1v(x,t)^2dx\nonumber\\
&&+\frac{\bar{b}}{2}\int_0^1p(x,t)^2dx -\frac{8}{\bar{b}}X(t)^2\left(b_X+1\right) -b\left(-U(t)+2U_{\rm e}\right)^2\nonumber\\
&&-\frac{\left(\dot{U}(t)+b_XX(t)\right)^2}{\bar{b}},
\end{eqnarray}
and hence,
\begin{eqnarray}
\dot{V}^*(t)&\leq&\frac{\bar{b}}{4}V^*(t)+M\left(X(t)^2+U(t)^2+\dot{U}(t)^2+U_{\rm e}^2\right),
\end{eqnarray}
where $M=-\frac{8}{\bar{b}}\left(b_X+1\right)-8b-2\frac{b_X^2+1}{\bar{b}}$. It follows since $U,X,\dot{U}\in\mathcal{L}_{\infty}$ that $\|u\|$, $\|u_x\|$, $\|v\|$, $\|p\|$, $\in\mathcal{L}_{\infty}$. Using (\ref{errordi}) it follows that $\|\eta\|\in\mathcal{L}_{\infty}$.

We show next that $\lim_{t\to\infty}\tilde{a}(t)=0$ when $\lim_{t\to\infty}V(t)=c\neq0$. Using (\ref{syss2}), (\ref{idnewn}), and the fact that $a>0$, $\bar{A}<0$, one can conclude that $\Omega$ is bounded, when $V$ is bounded, with $\lim_{t\to\infty}\Omega(t)=\frac{c}{a\bar{A}}$, and hence, it is sufficient to show that $\lim_{t\to\infty}\hat{\epsilon}(t)=0$, since then, one can conclude using (\ref{error eq2}) and (\ref{errorsh1}) that $\lim_{t\to\infty}\tilde{a}(t)=0$. Using an alternative to Barbalat's Lemma from \cite{liu} it is sufficient to show that $\frac{dG(t)}{dt}$, where $G(t)=\frac{\hat{\epsilon}(t)^2}{{1+\Omega(t)^2}}$, is bounded. We have that $G$ satisfies the relation $\frac{dG(t)}{dt}=2\frac{\hat{\epsilon}(t)}{\sqrt{1+\Omega(t)^2}}\frac{\left(\bar{A}\epsilon(t)-\dot{\hat{a}}(t)\Omega(t)+\tilde{a}(t)\left(\bar{A}\Omega(t)-X(t)\right)\right)}{\sqrt{1+\Omega(t)^2}}-\frac{\hat{\epsilon}(t)^2}{1+\Omega(t)^2}\frac{\Omega(t)\left(\bar{A}\Omega(t)-X(t)\right)}{1+\Omega(t)^2}$. Since $\frac{\hat{\epsilon}(t)}{\sqrt{1+\Omega(t)^2}}$, $\epsilon$, $\dot{\hat{a}}$, $\tilde{a}$ are bounded, using (\ref{syss2}) one can conclude that $X$ is also bounded (when $V$ is bounded), and hence, $\frac{dG(t)}{dt}$ is bounded. 

\setcounter{equation}{0}
\renewcommand{\theequation}{B.\arabic{equation}}
\section*{Appendix B}
\subsection*{Proof of Lemma \ref{1}}
Using (\ref{sys1new}), (\ref{syss}) together with (\ref{id2bx}), (\ref{idnnbx}) it is shown that for the error
\begin{eqnarray}
\zeta(x,t)&=&u(x,t)-b_X\mu(x,t)-\xi(x,t)\label{errordibx},
\end{eqnarray}
it holds that
\begin{eqnarray}
\zeta_t(x,t)&=&b\zeta_x(x,t)\label{error eqbx}\\
\zeta(0,t)&=&0.\label{error eq1bx}
\end{eqnarray}
Similarly to the proof of Theorem \ref{theorem2}, using the fact that $\zeta(x,t)-\hat{\zeta}(x,t)=-\tilde{b}_X\mu(x,t)$, for the Lyapunov function 
\begin{eqnarray}
V(t)&=&-\frac{1}{b}\int_0^1(2-x)\zeta(x,t)^2dx+\frac{1}{2}\epsilon(t)^2+\frac{1}{2\gamma}\tilde{b}_X(t)^2-\frac{\bar{A}}{2\gamma_3}\tilde{a}(t)^2,
\end{eqnarray}
along the solutions of (\ref{error eqbx}), (\ref{error eq1bx}), (\ref{error eq2}), (\ref{idn1upbx}), (\ref{idn2up}) it holds that
\begin{eqnarray}
\dot{V}(t)&\leq&-\zeta(1,t)^2-\frac{1}{2}\int_0^1\zeta(x,t)^2dx+\frac{\bar{A}}{2}\epsilon(t)^2-\frac{1}{2}\frac{\int_0^1\hat{\zeta}(x,t)^2dx}{1+\int_0^1\mu(x,t)^2dx}\nonumber\\
&&+\frac{\bar{A}}{2}\frac{\hat{\epsilon}(t)^2}{1+\Omega(t)^2}.\label{boundnessbx}
\end{eqnarray}
We only prove that $\lim_{t\to\infty}b_X(t)\to b_X$. The rest of the lemma is proved using the same arguments with the proof of Theorem \ref{theorem2}. Using (\ref{id2bx}), (\ref{id2sbx}) it is shown that the variable $\bar{\mu}(x,t)=\mu(x,t)+\frac{c}{ab}x$ satisfies $\bar{\mu}_t(x,t)=b\bar{\mu}_x(x,t)+X(t)-\frac{c}{a}$, $\bar{\mu}(0,t)=0$. Using the fact that $V(0)=c$ we get from (\ref{syss2}), (\ref{sys3new}) that $\tilde{X}(t)=e^{-a t}\tilde{X}(0)$, where $\tilde{X}=X-\frac{c}{a}$. Therefore,  $\tilde{X}\in\mathcal{L}_2$. For the function $V_2(t)=V_1(t)+\frac{9}{-2ab}\tilde{X}(t)^2$, where $V_1(t)=\int_0^1(2-x)\bar{\mu}(x,t)^2dx$ it holds that
\begin{eqnarray}
\dot{V}_2(t)\leq \frac{b}{4}V_1(t)+ \frac{1}{b}\tilde{X}(t)^2.
\end{eqnarray}
Therefore, $\sqrt{V_1}\in\mathcal{L}_2\cap\mathcal{L}_{\infty}$, and hence $\sqrt{\int_0^1\bar{\mu}(x,t)^2dx}\in\mathcal{L}_2$. Using the facts that $\int_0^1\hat{\zeta}(x,t)^2dx= \frac{\int_0^1\hat{\zeta}(x,t)^2dx}{1+\int_0^1\mu(x,t)^2dx}\left(1+\int_0^1\mu(x,t)^2dx\right)$, that $\sqrt{V_1}\in\mathcal{L}_{\infty}$, and that $\frac{\sqrt{\int_0^1\hat{\zeta}(x,t)^2dx}}{\sqrt{1+\int_0^1\mu(x,t)^2dx}}\in\mathcal{L}_2$, it also follows that $\sqrt{\int_0^1\hat{\zeta}(x,t)^2dx}\in\mathcal{L}_2$. Writing $\zeta(x,t)-\hat{\zeta}(x,t)+\tilde{b}_X\bar{\mu}(x,t)=\frac{c}{ab}x\tilde{b}_X$ we get that $\frac{c}{2ab}\left|\tilde{b}_X\right|\leq\sqrt{\int_0^1\zeta(x,t)^2dx}+\sqrt{\int_0^1\hat{\zeta}(x,t)^2dx}+\left|\tilde{b}_X\right|\sqrt{\int_0^1\bar{\mu}(x,t)^2dx}$, and hence, since also $\sqrt{\int_0^1\zeta(x,t)^2dx}\in\mathcal{L}_2$ and $\tilde{b}_X\in\mathcal{L}_{\infty}$, and $c\neq0$ we get that $\tilde{b}_X\in\mathcal{L}_{2}$. Since $\tilde{b}_X,\dot{\tilde{b}}_X\in\mathcal{L}_{\infty}$, one can conclude that $\frac{d\tilde{b}_X^2(t)}{dt}\in\mathcal{L}_{\infty}$, and hence, from the alternative to Barbalat's Lemma from \cite{liu} we conclude that $\lim_{t\to\infty}\tilde{b}_X(t)\to0$.










\begin{thebibliography}{50}
\baselineskip=1.8\normalbaselineskip

\bibitem{agarwal}
Y. Agarwal, B. Balaji, S. Dutta, R. K. Gupta, and T. Weng, ``Duty-cycling buildings aggressively: The next frontier in HVAC control", {\it IEEE Conference on Information
Processing in Sensor Networks}, 2011.
\bibitem{amin1}
S. Amin, X. Litrico, S. Sastry, and A. Bayen, ``Cyber security of water SCADA systems: (II) Attack detection using enhanced hydrodynamic models," {\it IEEE Transactions on Control Systems Technology} , vol. 21, pp. 1679--1693, 2012.
\bibitem{bastani}
A. Bastani, F. Haghighat, J. A. Kozinski, ``Contaminant source identification within a building: toward design of immune buildings,", {\it Building and Environment}, vol. 51, pp. 320--329, 2012.
\bibitem{baug}
A. Baughman, A. Gadgil, and W. Nazaroff, ``Mixing of a point source pollutant by natural convection flow within a room," {\it Indoor Air}, vol. 4, pp. 114--122, 1994.
\bibitem{litrico1}
N. Bedjaoui, X. Litrico, D. Koenig, and P.O. Malaterre, ``$H_{\infty}$ observer for time-delay systems: Application to FDI for irrigation canal," {\it IEEE Conference on Decision and Control}, San Diego, CA, 2006.
\bibitem{bekiaris}
N. Bekiaris-Liberis and M. Krstic, ``Lyapunov stability of linear predictor feedback for distributed input delays", {\it IEEE Transactions on Automatic Control}, vol. 56, pp.  655--660, 2011.
\bibitem{me1}
N. Bekiaris-Liberis and M. Krstic, ``Delay-adaptive feedback for linear feedforward systems," {\it Systems and Control Letters}, vol. 59, pp. 277--283, 2010.
\bibitem{me2}
N. Bekiaris-Liberis, M. Jankovic, and M. Krstic, ``Adaptive stabilization of LTI systems with distributed input delay," {\it International Journal of Adaptive Control and Signal Processing}, vol. 27, pp. 46--65, 2013.
\bibitem{signal}
J. S. Bendat, A. G. Piersol, {\it Engineering Applications of Correlation and Spectral Analysis}, Wiley, 1980.
\bibitem{delphine1}
D. Bresch-Pietri and M. Krstic, ``Delay-adaptive control for nonlinear systems," {\it IEEE Transactions on Automatic Control}, in press, 2014.
\bibitem{delphine2}
D. Bresch-Pietri, J. Chauvin and N. Petit, ``Adaptive control scheme for uncertain time-delay systems," {\it Automatica}, vol. 48, pp. 1536--1552, 2012.
\bibitem{delphine3}
D. Bresch-Pietri and M. Krstic, ``Delay-adaptive predictor feedback for systems with unknown long actuator delay," {\it IEEE Transactions on Automatic Control}, vol. 55, pp. 2106--2112, 2010.
\bibitem{delphine4}
D. Bresch-Pietri and M. Krstic, ``Adaptive trajectory tracking despite unknown input delay and plant parameters," {\it Automatica}, vol. 45, pp. 2074--2081, 2009.
\bibitem{cai}
H. Cai, X. Li, Z. Chen, L. Kong, ``Multiple indoor constant contaminant sources by ideal sensors: A Theoretical model and numerical validation," {\it Indoor and Built Environment}, vol. 22, pp. 897--909, 2013.
\bibitem{chao}
C. Chao and J. Hu, ``Development of a dual-mode demand control ventilation strategy for indoor air quality control and energy saving", {\it Building and Environment}, vol. 39, pp. 385--397, 2004.
\bibitem{chen}
Q. Chen, ``Ventilation performance prediction for buildings: A method overview and recent applications", {\it Building and Environment}, vol. 44, pp. 848--858, 2009.
\bibitem{flo1}
F. Di Meglio, R. Vazquez, and M. Krstic, ``Stabilization of a system of $n+1$ coupled first-order hyperbolic linear PDEs with a single boundary input," {\it IEEE Transactions on Automatic Control}, vol. 58, pp. 3097--3111, 2013.
\bibitem{flo2}
F. Di Meglio, D. Bresch-Pietri and U. J. F. Aarsnes, ``An adaptive observer for hyperbolic systems with application to underbalanced drilling," {\it IFAC World Congress}, Cape Town, 2014.
\bibitem{doe2008energy}
Department of Energy (DoE), ``Energy efficiency trends in residential and commercial buildings", 2010. Available at: http://apps1.eere.energy.gov/buildings/publications/pdfs/corporate/building\_trends \_2010.pdf
\bibitem{erickson}
V. L. Erickson, M. Carreira-Perpinan, and A. E. Cerpa, ``Observe: Occupancy-based system for efficient reduction of HVAC energy", {\it IEEE Conference on Information
Processing in Sensor Networks}, 2011.
\bibitem{ioannou}
P. Ioannou and J. Sun, {\it Robust Adaptive Control}, Prentice-Hall, 1996.
\bibitem{federspiel}
C. C. Federspiel, ``Estimating the inputs of gas transport processes in buildings", {\it IEEE Control Systems Technology}, vol. 5, pp. 480--489, 1997.
\bibitem{litrico2}
D. Koenig, N. Bedjaoui and X. Litrico, ``Unknown input observers design for time-delay systems: Application to An Open-Channel", {\it IEEE Conference on Decision and Control}, Seville, Spain, 2005.
\bibitem{krstic}
M. Krstic, I. Kanellakopoulos, and P. V. Kokotovic, {\it Nonlinear and Adaptive Control Design}, Wiley, 1995.
\bibitem{krstic book siam}
M. Krstic and A. Smyshlyaev, {\it Boundary Control of PDEs: A Course on Backstepping Designs}, SIAM, 2008.
\bibitem{mir1}
M. Krstic and A. Smyshlyaev, ``Backstepping boundary control for first order hyperbolic PDEs and application to systems with actuator and sensor delays," {\it Systems \& Control Letters}, vol. 57, pp. 750--758, 2008. 
\bibitem{k30sensor}
K-30 10,000 ppm \cotwo sensor, 2013. Available at: http://co2meters.com/Documentation/Datasheets/DS30-01\%20-\%20K30.pdf
\bibitem{lam}
K. P. Lam, M. Hoynck, B. Dong, B. Andrews, Y. Chiou, R. Zhang, D. Benitez, J. Choi, ``Occupancy detection through an extensive environmental sensor network in an open-plan office building", {\it IBPSA Building Simulation}, pp. 1452--1459, 2009.
 \bibitem{nilsen}
Y. Li and P. V. Nilsen, ``Commemorating 20 years of Indoor Air: CFD and ventilation research", {\it Indoor Air}, vol. 21, pp. 442--453, 2011. 
\bibitem{liu}
W.-J. Liu, M. Krstic, ``Adaptive control of Burgers equation with unknown viscosity", {\it International Journal of Adaptive Control and Signal Processing}, vol. 15, pp. 745Ð766, 2001.
\bibitem{liu zhai}
X. Liu and  Z.-J. Zhai, ``Protecting a whole building from critical indoor contamination with optimal sensor network design and source identification methods", {\it Building and Environment}, vol.  44, pp. 2276--2283, 2009.
\bibitem{megri}
 A. C. Megri and F. Haghighat, ``Zonal modeling for simulating indoor environment of buildings: Review, recent developments, and applications," {\it HVAC \& R Research}, vol. 13, pp. 887--905, 2007.
\bibitem{mora}
L. Mora, A. Gadgil, and E. Wurtz, ``Comparing zonal and CFD model predictions of isothermal indoor airflows to experimental data," {\it Indoor Air}, vol. 13, pp. 77--85, 2003.
\bibitem{moura}
S. J. Moura, N. A. Chaturvedi, and M. Krstic, ``PDE estimation techniques for advanced battery management systems--Part II: SOH identification", {\it American Control Conference}, 2012.
\bibitem{moura1}
S. J. Moura, N. A. Chaturvedi, and M. Krstic, ``PDE Estimation Techniques for Advanced Battery Management Systems - Part I: SOC Estimation," {\it American Control Conference}, 2012.
\bibitem{moura adaptive observer}
S. Moura, M. Krstic, and N. Chaturvedi, ``Adaptive PDE observer for battery SOC/SOH estimation via an electrochemical model," {\it ASME Journal of Dynamic Systems, Measurement, and Control}, vol. 136, paper 011015, 2014. 
\bibitem{nazaroff1989mathematical}
W. W. Nazaroff and G. R. Cass, ``Mathematical modeling of indoor aerosol dynamics", {\it Environmental Science \& Technology}, vol. 23, pp. 157--166, 1989.
\bibitem{nea}
National Environment Agency (NEA), ``E$^2$ Singapore," 2007. Available at:

http://www.nea.gov.sg/cms/ccird/E2\%20Singapore\%20\%28for\%20upload\%29.pdf.
\bibitem{parker}
S.-T. Parker and V. Bowman, ``State-space methods for calculating concentration dynamics inmultizone buildings", {\it Building and Environment}, vol. 46, pp. 1567--1577, 2011.
\bibitem{persily}
A. K. Persily, ``Evaluating building IAQ and ventilation with indoor carbon dioxide," {\it Transactions of American Society of Heating Refrigerating and Air Conditioning Engineers}, vol. 103, pp. 193--204, 1997.
\bibitem{sepannen}
O. Seppanen, W. Fisk, and M. Mendell, ``Association of ventilation rates and \cotwo concentrations with health and other responses in commercial and institutional buildings," {\it Indoor Air}, vol. 9, pp. 226--252, 1999.
\bibitem{sohn}
M. D. Sohn, P. Reynolds, N. Singh, A. J. Gadgil, ``Rapidly locating and characterizing pollutant releases in buildings," {\it Journal of the Air and Waste Management Association}, vol. 52, pp. 1422--1432, 2002.
\bibitem{adaptive pde}
A. Smyshlyaev and M. Krstic, {\it Adaptive Control of Parabolic PDEs}, Princeton University Press, 2010.
\bibitem{adrey observer}
A. Smyshlyaev and M. Krstic, ``Backstepping observers for a class of parabolic PDEs," {\it Systems and Control Letters}, vol. 54, pp. 613--625, 2005.
\bibitem{adaptive1}
A. Smyshlyaev and M. Krstic, ``Adaptive boundary control for unstable parabolic PDEs - Part II: Estimation-based designs," {\it Automatica}, vol. 43, pp. 1543--1556, 2007.
\bibitem{adaptive2}
A. Smyshlyaev and M. Krstic, ``Adaptive boundary control for unstable parabolic PDEs - Part III: Output-feedback examples with swapping identifiers," {it Automatica}, vol. 43, pp. 1557--1564, 2007.
\bibitem{adaptive3}
A. Smyshlyaev, Y. Orlov, and M. Krstic, ``Adaptive identification of two ustable PDEs with boundary sensing and actuation," {\it International Journal of Adaptive Control and Signal Processing}, vol. 23, pp. 131--149, 2009.
\bibitem{vuk}
 V. Vukovic, J. Srebric, ``Application of neural networks trained with multizone models for fast detection of contaminant source position in buildings," {\it ASHRAE Transactions}, vol. 113, pp. 154--162, 2007.
\bibitem{wang}
S. Wang and X. Jin, ``\cotwon-based occupancy detection for on-line outdoor air flow control", {\it Indoor and Built Environment}, vol. 7, pp. 165--181,
1998.
\bibitem{zhang}
T. F. Zhang and Q. Chen, ``Identification of contaminant sources in enclosed environments by inverse CFD modeling", {\it Indoor Air}, vol. 17, pp. 167--177, 2007.
\end{thebibliography}
\end{document}